\theoremstyle{plain}
\newtheorem{theorem}{Theorem}[section]
\newtheorem{lemma}[theorem]{Lemma}
\newtheorem{corollary}[theorem]{Corollary}
\theoremstyle{definition}
\theoremstyle{remark}
\newtheorem{remark}[theorem]{Remark}
\DeclareMathOperator{\Pos}{Pos} 
\DeclareMathOperator{\spin}{spin}
\newcommand{\reals}{\mathbb{R}}
\newcommand{\complexs}{\mathbb{C}}
\newcommand{\integers}{\mathbb{Z}}
\newcommand{\abs}[1]{\left\lvert#1\right\rvert} 
\newcommand{\tensor}{\otimes}
\newcommand{\into}{\hookrightarrow}
\newcommand{\iso}{\cong}
\DeclareMathOperator{\im}{im}      
\DeclareMathOperator{\tr}{tr}
\DeclareMathOperator{\ind}{Ind}
\DeclareMathOperator{\Ind}{Ind}
\newcommand{\forget}[1]{}
\global\let\c@equation=\c@theorem}
\begin{document}
\pagestyle{myheadings}
\markboth{P.~Piazza, T.~Schick, V.~F.~Zenobi }{On positive scalar
  curvature bordism}

\title{On positive scalar curvature bordism}

\author{ Paolo Piazza, Thomas Schick, Vito Felice
  Zenobi\footnote{PP thanks Ministero Istruzione Universit\`a Ricerca for partial
support through the PRIN 2015 {\em Spazi di Moduli e Teoria di
Lie}. TS and VFZ thank the German Science Foundation and its
    priority program ``Geometry at Infinity'' for partial support.}}
\date{}
\maketitle

\begin{abstract}
  Using standard results from higher (secondary) index theory, we prove that
  the positive scalar curvature bordism groups
  $\Pos_{4n}^{\spin}(G\times\integers)$ are infinite for any
  $n\ge 1$ and $G$ a group with non-trivial torsion. We construct
  representatives of each of these classes which are connected and with
  fundamental group $G\times \integers$. We get the same result for
  $\Pos^{\spin}_{4n+2}(G\times\integers)$ if $G$ is finite and contains an element
  which is not conjugate to its inverse. This generalizes
  the main result of Kazaras, Ruberman, Saveliev, ``On positive scalar
  curvature cobordism and the conformal Laplacian on end-periodic manifolds''
  to arbitrary even dimensions and arbitrary groups with torsion.
\end{abstract}

\section{Introduction}

The classification of Riemannian metrics of positive scalar curvature (up to
suitable natural equivalence relations) is an active object of study in
geometry. One popular way to organize this uses the \emph{Stolz positive scalar
curvature exact sequence} (compare \cite[Proposition 1.27]{PiazzaSchick_Stolz}
  \begin{equation}\label{eq:Stolz_seq}
   \cdots \to \Pos^{\spin}_n(B\Gamma)\to \Omega_n^{\spin}(B\Gamma)\to R^{\spin}_{n}(B\Gamma) \to
    \Pos^{\spin}_{n-1}(B\Gamma)\to\cdots
  \end{equation}
Here, $\Pos^{\spin}_n(B\Gamma)$ is one of our main objects of study, the group of
closed $n$-dimensional spin manifolds $(M,f,g)$ with a reference map $f\colon
M\to B\Gamma$ and a Riemannian metric $g$ of positive scalar
curvature. The equivalence relation is bordism, where the bordisms have to
carry the corresponding structure. We use throughout the usual convention
that Riemannian metrics on manifolds with boundary (e.g.~on a bordism) must have product structure near the boundary. 

In this setup, $\Gamma$ is just an
arbitrary group. If the starting point is a connected smooth manifold $M$, typically one
chooses $\Gamma=\pi_1(M)$. Moreover, $B\Gamma$ is a classifying space for
$\Gamma$ and the map $f$ contains essentially the same information as the
homomorphism $f_*\colon \pi_1(M)\to \Gamma$.

The group $\Omega^{\spin}(B\Gamma)$ is the usual
bordism group, whereas $R^{\spin}_n(B\Gamma)$ is Stolz' $R$-group, defined as the set of bordism classes $(W,f,g)$ where $W$ is a
    compact $(n+1)$-dimensional spin-manifold, possibly with boundary, with a
    reference map 
    $f\colon W\to B\Gamma$, and with a positive scalar curvature metric \emph{on the
      boundary} when the latter is non-empty.

    Our main goal is to show that the positive scalar curvature bordism groups
    $\Pos^{\spin}_n(B\Gamma)$ are rich (more precisely, map onto an infinite
    cyclic group) in new situations. Because the starting point often is a
    fixed connected manifold $M$ with fundamental group $\Gamma$, we will show
    in addition that the infinitely many different non-trivial representatives
    can be chosen to be connected and with fundamental group $\Gamma$ (mapped
    bijectively under the reference map to $B\Gamma$), even in dimension
    $4$. 
Indeed, we are taking up the main result \cite[Theorem 1]{KazarasRubermanSaveliev} of Kazaras, Ruberman, and Saveliev, which says
\begin{theorem}\label{theo:KRS}
  Let $n=4$ or $6$ and
  $\{1\}\ne G$ be a fundamental group of a $3$-dimensional spherical space
  form if
  $n=4$, or more generally a finite group with at least one element not
  conjugate to its inverse if $n=6$. Set $\Gamma:=G\times\integers$.

  Then $\Pos_n^{\spin}(B\Gamma)$ contains infinitely many elements,
  represented by maps $f\colon M\to B\Gamma$ where $M$ is connected and $f$
  induces an isomorphism in $\pi_1$.
\end{theorem}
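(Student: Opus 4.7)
The strategy is to detect infinitely many classes in $\Pos^{\spin}_n(B\Gamma)$ by means of secondary higher index invariants, most naturally the delocalized $\eta$-invariant (equivalently, the higher $\rho$-invariant living in the Higson--Roe analytic structure group) associated to a nontrivial conjugacy class of torsion elements of $G$. The plan has three ingredients: (i) a base psc representative with the prescribed fundamental group, (ii) a procedure producing an infinite family of modifications that stays inside $\Pos^{\spin}_n(B\Gamma)$ while keeping representatives connected with $\pi_1 = \Gamma$, and (iii) a bordism-invariant numerical pairing that separates these modifications.

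For the base representative I would take, in dimension $n=4$, $Z := N \times S^1$ where $N$ is the given $3$-dimensional spherical space form with $\pi_1(N) = G$, equipped with the product psc metric and the obvious classifying map $f_0 \colon Z \to B\Gamma$ inducing an isomorphism on $\pi_1$. For $n=6$ one can instead take $Z = N \times S^1 \times S^2$, or any suitable total space of a psc fibration over $S^1$ with the required $\pi_1$. The family of candidate representatives $M_k := Z \# \Sigma_k$ is then produced by connected summing with simply connected closed spin psc $n$-manifolds $\Sigma_k$ chosen so that their individual secondary index data are pairwise distinct (for instance, $\Sigma_k$ being boundaries of simply connected spin psc-nullbordisms with distinct $\hat A$-genera, which plays the role of varying ``psc-homology-sphere'' summands). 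Since the summands are simply connected of dimension $\ge 3$, the connected sum preserves both $\pi_1$ and, via Gromov--Lawson surgery, the psc condition, and the reference map to $B\Gamma$ extends canonically.

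To distinguish the $[M_k]$, I would pair the higher $\rho$-invariant with the delocalized trace $\tau_{\langle h\rangle}$ at the conjugacy class of a torsion element $h \in G$ not conjugate to $h^{-1}$. The hypothesis $h \not\sim h^{-1}$ is precisely what prevents a symmetry from forcing the resulting real-valued invariant to vanish identically. Using either the relative index theorem or the variational formula for $\eta_{\langle h\rangle}$ under connected sum with a simply connected psc summand (where the variation is picked up on the $G$-piece of the universal cover), one should arrange that the numbers $\eta_{\langle h\rangle}(M_k, g_k) \in \reals$ take infinitely many distinct values. Since this is a psc-bordism invariant by standard results, the $[M_k]$ are then pairwise non-bordant in $\Pos^{\spin}_n(B\Gamma)$.

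The main obstacle is the low-dimensional case $n=4$: there are no exotic $4$-spheres and psc-preserving surgery on a $4$-manifold is restricted to $S^0$-surgery (connected sum), so the flexibility to modify $Z$ is constrained. One therefore has to exploit the $\integers$-factor of $\Gamma$ substantively, passing to the infinite cyclic cover (which is end-periodic) and applying a Taubes-type end-periodic Atiyah--Patodi--Singer index theorem for the spin Dirac operator, which is the route followed by Kazaras--Ruberman--Saveliev. Making the secondary invariants on the end-periodic cover simultaneously sensitive enough to detect the chosen $\Sigma_k$ and well-defined as psc-bordism invariants is the technical heart of the argument; verifying connectedness of the representatives and compatibility of the reference map in $\pi_1$ is then routine given the connected-sum construction above.
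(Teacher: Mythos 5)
Your proposal identifies the right circle of ideas (delocalized $\eta$/higher $\rho$-invariants at a torsion conjugacy class, psc-bordism invariance) but the central mechanism you propose for producing infinitely many distinct classes does not work, and the route you fall back on for $n=4$ is precisely what the paper is designed to avoid.

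The fatal gap is the modification step $M_k := Z \# \Sigma_k$ with $\Sigma_k$ simply connected psc spin manifolds. The delocalized $\eta$-invariant $\eta_{\langle h\rangle}$ at a nontrivial conjugacy class is insensitive to such modifications: a simply connected summand contributes nothing to the $\langle h\rangle$-part of the $\Gamma$-equivariant index data, and indeed $\eta_{\langle h\rangle}(\Sigma_k)=0$ since $\pi_1(\Sigma_k)=\{1\}$. Worse, if $\Sigma_k$ is psc-nullbordant (as it would be if it ``bounds a simply connected spin psc-nullbordism''), then $[M\#\Sigma_k] = [M]$ \emph{outright} in $\Pos^{\spin}_n(B\Gamma)$, because $\Pos$ is a bordism group and the Gromov--Lawson surgery bordism from $M\sqcup\Sigma_k$ to $M\#\Sigma_k$, glued to $M\times I$ and the psc-nullbordism of $\Sigma_k$, gives a psc-bordism. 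So your family $\{M_k\}$ would collapse to a single class. There is also an internal inconsistency: a closed spin psc manifold has vanishing $\hat A$-genus by Lichnerowicz, so one cannot choose psc $\Sigma_k$ ``with distinct $\hat A$-genera.''

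The paper's actual construction is quite different and avoids both problems. It does not fix a base manifold and modify it; instead it works one step up in the Stolz sequence, constructing an element $(W,U,g)$ of $R^{\spin}_n(BG)$ by taking $W$ to be a spin nullbordism of a disjoint union $L = N_L\cdot L(p;1,\dots,1)$ of lens spaces (for $n\equiv 0\bmod 4$; lens-space bundles times Kummer surfaces for $n\equiv 2\bmod 4$), and showing via the $L^2$-APS index theorem that $\ind_\rho(\ind W)\ne 0$ while $\ind_\rho\circ\mu = 0$ by Atiyah's $L^2$-index theorem. This produces an infinite cyclic subgroup of $R^{\spin}_n(BG)$ mapping injectively into $\Pos^{\spin}_{n-1}(BG)$. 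The factor $\integers$ of $\Gamma=G\times\integers$ is then exploited not via end-periodic covers but via the K\"unneth/suspension isomorphism: multiplying by $[S^1]$ embeds this data as a direct summand into the corresponding diagram for $\Gamma$, shifting the dimension so the elements land in $\Pos^{\spin}_n(B\Gamma)$. The infinitely many classes $x_k$ are then $k$ times the generator (essentially $k N_L$ disjoint copies of $L(p;1,\dots,1)\times S^1$), made connected with $\pi_1\cong\Gamma$ by Gromov--Lawson surgery, as already worked out in the Kazaras--Ruberman--Saveliev and Mrowka--Ruberman--Saveliev papers cited. The dimensional obstruction you worry about in $n=4$ never arises, because the APS index is computed on the nullbordism $W$, not on an end-periodic cover; invoking the Taubes/KRS end-periodic index theorem, as you suggest, is exactly the technical machinery this paper shows to be unnecessary.
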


We improve that theorem by allowing more general groups and all even
dimensions bigger than $2$.
\begin{theorem}\label{theo:main}
  Let $n>2$ be an even integer. If $n=4k$ is divisible by $4$, let $G$  be an
  arbitrary finitely presented group which contains a non-trivial torsion
  element. If $n=4k+2$ 
  let $G$ be a finite group such that at least one of its elements is not
  conjugate to its inverse. Set $\Gamma:=G\times\integers$.

  Then $\Pos_n^{\spin}(B\Gamma)$ contains infinitely many elements $x_j$,
  represented by connected manifolds $M_j$ with fundamental group $\Gamma$ as in the
  Theorem of Kazaras, Ruberman, and Saveliev.

  Even better, we have a non-trivial homomorphism $\Ind_\rho\colon R^{\spin}_{n+1}(B\Gamma)\to
  \reals$ and the $x_j$ lift to elements in $R^{\spin}_{n+1}(B\Gamma)$ whose
  image under $\Ind_\rho$ form an infinite cyclic subgroup of $\reals$.
\end{theorem}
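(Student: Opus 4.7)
The plan is to detect the classes $x_k$ by constructing an $\reals$-valued secondary (rho-type) invariant on the Stolz $R$-group and producing explicit lifts in $R^{\spin}_{n+1}(B\Gamma)$ whose invariants form an arithmetic progression. Since $n+1$ is odd, the natural tool is a delocalized APS-eta invariant at a non-trivial conjugacy class of $\Gamma$; the $\integers$-factor of $\Gamma$ will be used, via Fourier theory on the circle, to manufacture an infinite family of independent twists.

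First I would define $\Ind_\rho\colon R^{\spin}_{n+1}(B\Gamma)\to\reals$. Given $[W,f,g_W]\in R^{\spin}_{n+1}(B\Gamma)$, pull back the universal $\Gamma$-covering along $f$ to obtain a Galois covering $\widetilde W\to W$; positivity of scalar curvature on $\boundary W$ makes the lifted spin Dirac operator on $\boundary\widetilde W$ uniformly invertible, so for a non-trivial conjugacy class $\langle h\rangle\subset\Gamma$ one has a well-defined delocalized eta invariant $\eta_{\langle h\rangle}(\boundary\widetilde W)$ in the sense of Lott. A delocalized APS-type formula (Lott--Wahl and successors) writes this eta as a delocalized bulk contribution on $\widetilde W$ plus a topological pairing; both vanish either when $h$ has finite order (the case $n=4k$) or when $h$ is not conjugate to $h^{-1}$ (the case $n=4k+2$, where otherwise a Clifford-symmetry argument would force $\eta_{\langle h\rangle}=-\eta_{\langle h^{-1}\rangle}$ to be its own negative and hence zero). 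The residue descends to a homomorphism $\Ind_\rho$ on $R^{\spin}_{n+1}(B\Gamma)$.

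Next I would construct a generator. Starting from a low-dimensional psc spin manifold $X$ whose fundamental group contains $h$ (a lens space if $\langle h\rangle$ is cyclic, otherwise a suitable bundle over $B\langle h\rangle$) on which the ordinary APS-rho of $h$ is non-zero, cross $X$ with a spin manifold of $\hat{A}$-genus $1$ to raise the dimension, and cap off with an explicit $R$-lift; call this class $[W_1]\in R^{\spin}_{n+1}(B\Gamma)$. The $\integers$-factor of $\Gamma$ enters via the reference map to $B\integers=S^1$: precomposing with a degree-$k$ self-map of $S^1$ produces a class $[W_k]$, and $k$-fold-cover multiplicativity of eta gives $\Ind_\rho([W_k])=k\cdot\Ind_\rho([W_1])$. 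These classes therefore span an infinite cyclic subgroup of $\reals$, and pushing forward under the Stolz boundary map in \eqref{eq:Stolz_seq} yields pairwise distinct $x_k\in\Pos_n^{\spin}(B\Gamma)$.

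Finally, to realize each $x_k$ by a connected manifold with fundamental group exactly $\Gamma$ — including the low-dimensional case $n=4$ — I would apply Gromov--Lawson--Gajer surgery: a connected sum normalizes $\pi_0$, surgeries in codimension at least three kill spurious generators of $\pi_1$, and both moves preserve both the psc metric and the bordism class. The main obstacle I anticipate is the analytic content of the first step: verifying that $\Ind_\rho$ is bordism invariant \emph{and} is genuinely non-zero on the explicit model $[W_1]$. Exactly here the structural hypotheses on $G$ are used, and the verification reduces to a concrete delocalized APS-index calculation — this is where the "standard results from higher (secondary) index theory" advertised in the abstract are essential, and where the $4k$ versus $4k+2$ dichotomy in the hypothesis on $G$ becomes unavoidable.
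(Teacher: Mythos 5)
Your overall strategy---detect classes in $R^{\spin}_{n+1}(B\Gamma)$ by a delocalized secondary invariant, show the detecting homomorphism kills the image of $\Omega^{\spin}$, conclude injectivity into $\Pos^{\spin}_n$ via the Stolz sequence, then use Gromov--Lawson surgery to normalize $\pi_0$ and $\pi_1$---is the same as the paper's. But you diverge at two important points, and one of them is a genuine gap.

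The paper never works directly with the odd-dimensional group $\Gamma = G\times\integers$. It constructs a single element $[W,U,g]\in R^{\spin}_n(BG)$ over $G$ in the \emph{even} dimension $n$, detects it by a map $\ind_\rho\colon K_0(C^*G)\to\reals$ (difference of the canonical trace and the trivial representation if $n\equiv 0\pmod 4$; the Botvinnik--Gilkey twisted index if $n\equiv 2\pmod 4$), and then passes to $\Gamma$ purely formally by taking the Cartesian product with $S^1$. The key structural input is a K\"unneth splitting: $\times[S^1]$ embeds $\Omega^{\spin}_n(BG)\to K_n(C^*G)$ as a direct summand of $\Omega^{\spin}_{n+1}(B\Gamma)\to K_{n+1}(C^*\Gamma)$, so all the detection done over $G$ survives unchanged. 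The infinitely many classes $x_k$ are then just the images in $\Pos^{\spin}_n(B\Gamma)$ of the integer multiples $k\cdot[W\times S^1]$ in the abelian group $R^{\spin}_{n+1}(B\Gamma)$. No variation of the reference map is needed.

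Your mechanism for producing the infinite family---precomposing the $S^1$-component of the reference map with a degree-$k$ self-map and invoking a ``$k$-fold-cover multiplicativity of eta''---is not established and, as far as I can tell, is not a theorem. The detecting conjugacy class $\langle h\rangle$ in the paper's picture lies in $G\times\{0\}$, and the delocalized eta invariant localized at such a class is controlled by the $G$-part of the cover; it is not at all clear that rescaling the $\integers$-component of the classifying map multiplies that number by $k$, and in the natural formulations it does not. Since this is exactly the step that is supposed to generate infinitely many elements, the argument has a hole where the paper has a one-line observation (take $\integers$-multiples of one detected class). You should replace the degree-$k$ maps with multiplication by $k$ in the $R$-group.

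A secondary issue: your explanation of the $4k$ versus $4k+2$ dichotomy is garbled. You say ``both vanish either when $h$ has finite order (the case $n=4k$) or when $h$ is not conjugate to its inverse (the case $n=4k+2$).'' The bulk/local term of a delocalized invariant at a non-trivial conjugacy class always vanishes; what the hypotheses on $G$ actually buy is \emph{non-vanishing} of the boundary $\eta$-term. In the $4k$ case the hypothesis that $G$ contains a non-trivial torsion element lets one embed $C_p\into G$ and compute the $L^2$-$\rho$-invariant of a lens space $L(p;1,\dots,1)$ via Donnelly's fixed-point formula, where it is nonzero. In the $4k+2$ case the boundary is $L_0 = X\times K^r$ (a lens space bundle times Kummer surfaces) and the twisted Botvinnik--Gilkey $\eta_\rho$ is nonzero only if some element is not conjugate to its inverse; this is the symmetry obstruction you allude to but attribute to the wrong place. (Also, in the $4k$ case the paper does not multiply by an $\hat A$-genus-one manifold---lens spaces are used directly at the correct dimension; the Kummer multiplication only appears in the $4k+2$ case.)

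In short: the identification of the right class of invariants and the endgame (surgery to fix $\pi_0,\pi_1$) are correct and match the paper, but the route for producing infinitely many classes should go through the K\"unneth splitting and integer multiples of a single class over $G$, not through degree-$k$ maps of $S^1$, and the role of the dimension/group hypotheses needs to be reassigned to the non-vanishing, not the vanishing, side of the APS formula.
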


Theorem \ref{theo:KRS} of \cite{KazarasRubermanSaveliev} is based on the beautiful, but
rather complicated and technical index theory for manifolds with periodic
ends. The associated bordism invariance is very delicate and uses, for the
time being, minimal hypersurface techniques which cause the dimension
restriction in Theorem \ref{theo:KRS}

The main point we want to make in this note is that this delicate theory is
actually not necessary for the result at hand. Instead, it can be derived from
well known (and not too complicated) results in higher index theory, and along
the way one gets the more general result. Related techniques and results are
developed in \cite{XieYuZeidler} which can be used to get even stronger results,
compare the discussion below.

\section{Secondary higher index}

The distinction of bordism classes of metrics of positive scalar curvature in
the literature typically relies on secondary index invariants of the Dirac
operator, and this is 
precisely how we prove the main part of Theorem \ref{theo:main}.

Recall that the classical (higher) index of the Dirac operator can be defined
for closed manifolds, but also for manifolds with boundary provided the
boundary operator is invertible \cite[Section 2.2]{PiazzaSchick_Stolz}. In particular, we have a commutative diagram
\begin{equation}\label{eq:Gamma}
  \begin{CD}
    \Omega^{\spin}_n(B\Gamma) @>>> R^{\spin}_n(B\Gamma)\\
    @VVV @VV{\ind}V\\
    K_n(B\Gamma) @>{\mu}>> K_n(C^*\Gamma)
  \end{CD}
\end{equation}
Here, $\mu$ is the Baum-Connes assembly map which in this setup is \emph{not}
an isomorphism if $\Gamma$ is not torsion-free.

This diagram is the easy and elementary part of the diagram ``mapping the Stolz exact
sequence to analysis'' developed in \cite{PiazzaSchick_Stolz}. Recall that $\Gamma=G\times \integers$. The diagram
\eqref{eq:Gamma} and the corresponding one for $G$ are closely related via a
kind of K\"unneth theorem. Indeed, we have a transformation, given by product
with $S^1$ or its fundamental K-theory class (and using that $B\Gamma=BG\times
S^1$ with $S^1=B\integers$)
\begin{equation}
  \label{eq:from_G_to_Gamma}
  \begin{CD}
    \Omega^{\spin}_{n}(BG)  @>>> R^{\spin}_{n}(BG) &&&&  \Omega^{\spin}_{n+1}(B\Gamma) @>>> R^{\spin}_{n+1}(B\Gamma)\\
    @VVV @VV{\ind}V  @>{\times [S^1]}>>  @VVV @VV{\ind}V\\
    K_{n}(BG)  @>{\mu}>> K_{n}(C^*G) &&&& K_{n+1}(B\Gamma) @>{\mu}>> K_{n+1}(C^*\Gamma)
  \end{CD}
\end{equation}

\begin{lemma}
  After removing the term $R^{\spin}_*(\cdot)$, the map $\times[S^1]$ of  \eqref{eq:from_G_to_Gamma} is an
  embedding as direct summand of the left diagram into the right. In other
  words, the maps $\times [S^1]$ are injective and the map $\mu$
  is compatible with a suitable direct sum decomposition of the right hand
  side into the left hand side and a complement.
\end{lemma}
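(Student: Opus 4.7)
The strategy is to exhibit, for each of the three theories appearing in \eqref{eq:from_G_to_Gamma} (after removal of $R^{\spin}_*$), a Künneth-type direct sum decomposition, and then to check that these splittings are intertwined by the vertical arrows.

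For the rows involving $\Omega^{\spin}_*$ and $K_*(B(-))$, I would invoke the pointed homotopy equivalence $S^1_+ \simeq S^0\vee S^1$. Smashing with $(BG)_+$, for any generalised homology theory $E_*$ this yields
\[
E_{n+1}(BG\times S^1) \iso E_{n+1}(BG)\oplus E_n(BG),
\]
with the second summand precisely the image of the external product with the fundamental class $[S^1]\in E_1(S^1)$. A retraction onto the first summand is provided by pullback along the projection $BG\times S^1\to BG$, i.e.\ by the splitting $\Gamma\onto G$ of the inclusion $G\into\Gamma$. This handles $E=\Omega^{\spin}$ and $E=K$ simultaneously.

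For the bottom row, one has $C^*\Gamma \iso C^*G\tensor C(S^1)$, which holds equally for maximal or reduced completions since $C(S^1)$ is nuclear. The Künneth theorem of Schochet applies because $K_*(C(S^1))$ is free abelian of rank one in each degree, generated by the unit $[1]\in K_0$ and by the unitary $[z]\in K_1$; one obtains
\[
K_{n+1}(C^*\Gamma)\iso K_{n+1}(C^*G)\oplus K_n(C^*G),
\]
whose second summand is the image of external multiplication by $[z]$. This coincides with the bottom $\times[S^1]$ of \eqref{eq:from_G_to_Gamma} once one identifies $[z]$ with the image of the K-homology fundamental class of $S^1=B\integers$ under the assembly map for $\integers$ (which is an isomorphism by Baum--Connes).

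Finally, the compatibility of the three splittings with the vertical maps reduces to naturality plus multiplicativity. Naturality in the group variable, applied to the split extension $1\to\integers\to\Gamma\to G\to 1$, aligns the first summands across all rows. For the second summands one invokes multiplicativity of the K-orientation $\Omega^{\spin}_*(X)\to K_*(X)$ and of the analytic assembly map $\mu$ under external product with a fixed class, together with the identification of $[S^1]$ with $[z]$ noted above. The main delicate point is this multiplicativity of $\mu$ with respect to the cross product, but for the special case of a product with $[S^1]$ it is classical (and can alternatively be deduced, if one prefers, from the Pimsner--Voiculescu sequence applied to the trivial $\integers$-action).
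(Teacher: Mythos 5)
Your proof is correct and follows essentially the same route as the paper: a Künneth/splitting argument for the generalized homology theories (the paper phrases it via the suspension isomorphism for $\Sigma(X_+)$, you use the equivalent stable splitting $S^1_+\simeq S^0\vee S^1$), Schochet's Künneth theorem for the $C^*$-algebra $K$-theory, and multiplicativity of the assembly map under external product to mesh the splittings. One tiny slip: the retraction onto the first summand is the \emph{pushforward} $p_*$ along the projection $BG\times S^1\to BG$ (these are homology theories), not a pullback, but the map you intend is the right one.
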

\begin{proof}
  This is certainly well known. For the convenience of the reader we give
  here a detailed proof.
  
  For a multiplicative generalized homology theory $E_*$ and for natural
  transformations of such, in particular for $\Omega^{\spin}_*\to K_*$, this
  is a 
  consequence of the natural K\"unneth theorem for the special product
  $X\times S^1$ 
  or equivalently of the suspension isomorphism. The desired splitting
  of $E_*(X)\xrightarrow{\times [S^1]} E_{*+1}(X\times S^1)$ is obtained as
  the composition
  \begin{equation*}
  E_{*+1}(X\times S^1)\xrightarrow{pr} \tilde E_{*+1}(X\times S^1/X\times *)\xrightarrow{=}
  \tilde E_{*+1}(\Sigma (X_+))\xrightarrow[\iso]{\sigma^{-1}}\tilde E_*(X_+)=E_*(X).
\end{equation*}
Here $pr$ is the canonical projection, $X_+$ is the disjoint union of $X$ and
an additional basepoint, $\Sigma (X_+)$ its reduced suspension, and $\sigma$ is
the suspension
 isomorphism. The complementary summand is the (injective) image
 $i_*(E_{*+1}(X))\subset E_{*+1}(X\times S^1)$ for $i\colon X\into X\times
 S^1; x\mapsto (x,*)$ (or the exterior product with the unit $1$), so that we
 get in particular the direct sum decomposition 
 \begin{equation*}
 K_{*+1}(X\times S^1)= K_{*+1}(X)\otimes 1 \oplus K_*(X)\otimes [S^1].
\end{equation*}
For the $C^*$-algebra K-theory, the K\"unneth theorem \cite[Theorem
4.1]{Schochet} 
 specializes in our situation to the isomorphism (given by external tensor
 product) $K_*(C^*G)\tensor
 K_*(C^*\integers)\to K_*(C^*G\tensor C^*\integers)=K_*(C^*\Gamma)$. Using
 $K_*(C^*\integers) =1\cdot \integers\oplus [S^1]\cdot\integers$ with unit
 $1\in K_0$ and $[S^1]\in K_1(C^*\integers)$, the image of $[S^1]$ under the Baum-Connes
 isomorphism $\mu_{S^1} \colon K_*(S^1)\xrightarrow{\iso} K_*(C^*\integers)$, we
 obtain the splitting
 \begin{equation*}
   K_{*+1}(C^*\Gamma) = K_{*+1}(C^*G)\tensor 1 \oplus K_*(C^*\Gamma)\tensor
   [S^1].
 \end{equation*}
  The Baum-Connes map $\mu$ is compatible with external products, proving the
  rest of the claims.
\end{proof}


The strategy is now the following.
\begin{enumerate}
\item\label{item:deloc} We recall a suitable homomorphism $\ind_\rho\colon K_{n}(C^*G)\to \reals$, a
  delocalized index. ``Delocalized'' means in particular that $\ind_\rho\circ
  \mu\colon  K_{n}(BG)\to \reals$ is zero.
\item We will construct an appropriate element $[W,f,g]\in R^{\spin}_{n}(BG)$ with
  $\ind_\rho(\ind[W,f,g])\ne 0$. It generates an infinite cyclic subgroup of
  $R_{n}^{\spin}(BG)$ detected in $K_{n}(C^*G)$ and then in $\reals$ as image of
  the maps $\ind$ and $\ind_\rho\circ \ind$, respectively.
\item By the commutativity of \eqref{eq:Gamma} and using item \ref{item:deloc}, no
  non-trivial element of this infinite cyclic subgroup is in the image of
  $\Omega_{n}^{\spin}(BG)\to R^{\spin}_{n}(BG)$, and therefore this
  infinite cyclic 
  subgroup is mapped injectively to $\Pos_{n-1}^{\spin}(BG)$.
\end{enumerate}

 Finally, we take the Cartesian product with $S^1$.
\begin{corollary}
 The element $W\times S^1$
  generates an infinite cyclic subgroup of $R^{\spin}_{n+1}(B\Gamma)$ detected
  in $K_{n+1}(C^*\Gamma)$ as image of the map $\ind$. 

  None of its non-zero
  elements is in the image of $\Omega^{\spin}_{n+1}(B\Gamma)$, and so this
  infinite cyclic 
  group generated by $W\times S^1$ injects into $\Pos^{\spin}_{n}(B\Gamma)$. 
\end{corollary}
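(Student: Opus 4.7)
The plan is to combine three ingredients: multiplicativity of the higher index under Cartesian products, the K\"unneth splitting from the preceding lemma, and a delocalized trace on $K_{n+1}(C^*\Gamma)$ manufactured from the one on $K_n(C^*G)$.

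First I would equip $W\times S^1$ with the product spin structure and the product metric $g\oplus g_{S^1}$ for a flat metric $g_{S^1}$, and equip the boundary $\boundary W\times S^1$ with the product of the given positive scalar curvature metric on $\boundary W$ and $g_{S^1}$; since scalar curvature is additive for Riemannian products and $S^1$ is flat, the positive scalar curvature condition on the boundary is preserved. Composing $f$ with the projection and using $S^1=B\integers$ gives the reference map to $B\Gamma = BG\times S^1$, so $[W\times S^1]\in R^{\spin}_{n+1}(B\Gamma)$. By the standard multiplicativity of the higher index for manifolds with boundary whose boundary Dirac operator is invertible, one obtains
\begin{equation*}
\ind(W\times S^1) = \ind(W)\times [S^1]
\end{equation*}
in $K_{n+1}(C^*\Gamma)$, where the right-hand side is the external K\"unneth product.

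Next, I would combine this with the K\"unneth splitting of the previous lemma. Since $\times [S^1]\colon K_n(C^*G)\to K_{n+1}(C^*\Gamma)$ is injective as the inclusion of a direct summand, and since $\ind(W)$ generates an infinite cyclic subgroup of $K_n(C^*G)$ (by the detection via $\ind_\rho$ in Step (2) of the strategy above), the class $\ind(W\times S^1) = \ind(W)\times [S^1]$ generates an infinite cyclic subgroup of $K_{n+1}(C^*\Gamma)$. A fortiori $[W\times S^1]$ has infinite order in $R^{\spin}_{n+1}(B\Gamma)$, and the cyclic subgroup it generates injects into $K_{n+1}(C^*\Gamma)$ via $\ind$.

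For the statement that no non-zero multiple lies in the image of $\Omega^{\spin}_{n+1}(B\Gamma)$, I would transport the delocalized trace to $\Gamma$. Using the direct sum decomposition
\begin{equation*}
K_{n+1}(C^*\Gamma) = K_{n+1}(C^*G)\tensor 1 \oplus K_n(C^*G)\tensor [S^1]
\end{equation*}
from the lemma, define $\ind_\rho^\Gamma\colon K_{n+1}(C^*\Gamma)\to\reals$ by projecting onto the second summand and then applying $\ind_\rho$. Then $\ind_\rho^\Gamma(\ind(k\cdot[W\times S^1])) = k\cdot\ind_\rho(\ind(W))\ne 0$ for every $k\ne 0$. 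On the other hand, by the compatibility of $\mu$ with the K\"unneth splittings on both sides (part of the lemma), one has $\ind_\rho^\Gamma\circ\mu_\Gamma = \ind_\rho\circ\mu_G$ on $K_n(BG)\tensor [S^1]$, which vanishes by the delocalized property of $\ind_\rho$ in Step (1), and $\ind_\rho^\Gamma\circ\mu_\Gamma$ is zero on the other summand by construction. Hence $\ind_\rho^\Gamma$ vanishes on the image of $\mu_\Gamma$, and by the commutativity of diagram \eqref{eq:Gamma} applied to $\Gamma$, any element coming from $\Omega^{\spin}_{n+1}(B\Gamma)$ is annihilated by $\ind_\rho^\Gamma\circ\ind$. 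This is incompatible with the non-vanishing above, so no non-zero multiple of $[W\times S^1]$ is in the image.

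The main obstacle I expect is establishing the product formula $\ind(W\times S^1)=\ind(W)\times [S^1]$ in the $R$-group setting (manifold with boundary, invertible boundary operator), and matching this analytic external product with the abstract $\times [S^1]$ map appearing in the K\"unneth splitting of the lemma. Both are well documented in higher index theory, but the bookkeeping of boundary conditions and of the identification $B\Gamma=BG\times S^1$ needs to be done carefully; once settled, everything else is diagram chasing from the lemma.
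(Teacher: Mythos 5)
Your proposal is correct and follows essentially the same approach as the paper: use the K\"unneth product formula $\ind(W\times S^1)=\ind(W)\times[S^1]$, the direct-summand structure from the preceding lemma, and the compatibility of $\mu$ with that splitting to reduce everything to the already-established facts about $W$ over $BG$. Your explicit construction of $\ind_\rho^\Gamma$ by projecting onto the $K_n(C^*G)\otimes[S^1]$ summand and then applying $\ind_\rho$ is a natural way to spell out the homomorphism $\Ind_\rho\colon R^{\spin}_{n+1}(B\Gamma)\to\reals$ appearing in the main theorem, and the "main obstacle" you flag (matching the analytic external product with the abstract $\times[S^1]$ map) is exactly what is implicitly asserted by the commutativity of diagram \eqref{eq:from_G_to_Gamma}.
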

\begin{proof}
  This follows from the
  commutativity of \eqref{eq:Gamma} and the fact that the K\"unneth map
  \eqref{eq:from_G_to_Gamma} gives an inclusion as direct summand of the whole map
  $ K_n(BG)\xrightarrow{\mu} K_n(C^*G)$ into
  $K_{n+1}(B\Gamma)\xrightarrow{\mu}K_{n+1}(C^*\Gamma)$.
\end{proof}

It remains to construct $W$ and $\ind_\rho$ with the appropriate
properties. In all cases, $W$ will be a null bordism of a disjoint union of
lens spaces or lens space bundles, we describe this explicitly later.

\subsubsection{Case $n\equiv 0\pmod 4$}

Essentially, this case has been
treated in \cite{PiazzaSchick} and we refer to this paper for more details. We choose $\ind_\rho\colon K_0(C^*G)\to \reals$ to be equal
to the difference of the homomorphisms induced by the standard trace and
by the trivial representation. Recall that the first one gives the $L^2$-index and it is induced by the
canonical trace $\tr_G\colon C^*G\to\complexs$, while the second one gives the ordinary index and  is induced by the
trivial homomorphism $G\to \{1\}$.

It is a direct consequence of Atiyah's $L^2$-index theorem \cite[(1.1)]{Atiyah} that $\ind_{\rho}$
defined in this way vanishes on the image of $\mu$.

Small caveat: we have to use a
$C^*$-completion $C^*G$ of $\complexs[G]$ such that the trivial homomorphism
extends to this completion. For this, one could use the maximal
$C^*$-completion or a smaller, more geometric one (based on the direct sum of
the regular representation and the trivial representation). The relevant
theory is well known, compare e.g.~\cite{SchickSeyedhosseini}.

We have to compute $\ind_\rho$ for a compact manifold $W$ of dimension
$n$ with boundary $L$ of dimension $n-1$,
where the boundary is equipped with a metric of positive scalar curvature, and
where $W$ is equipped with a reference map to $BG$. Again, this is a classical
result, an easy special case of the ($L^2$)-Atiyah-Patodi-Singer index
theorem \cite{Ramachandran}: this index is the Cheeger-Gromov $L^2$-$\rho$-invariant of the
positive scalar
curvature metric of the boundary $L$, where we
use the $G$-covering pulled back via the map to $BG$.

Finally, for a cyclic group $C_p$ of order $p$ which we choose to be prime,
the $L^2$-$\rho$-invariant is
just the Atiyah-Patodi-Singer $\rho$-invariant associated to
$\alpha:=\frac{1}{p}\rho_{reg}-\rho_1$, the linear combination of the trivial
and the regular representation in the complexified representation ring.

For the group $G$ with non-trivial torsion element, we choose an embedding
$\iota\colon C_p\to G$ with induced map $B\iota\colon BC_p\to BG$. The
$L^2$-$\rho$-invariant is compatible with ``induction'', meaning that for a
map $L\to BC_p\xrightarrow{B\iota}BG$ the $L^2$-$\rho$-invariants with respect
to the covering pulled back from $BC_p$ and from $BG$ coincide.
In this situation, therefore, $\rho_{(2)}(L)=\rho_{\alpha}(L)$, see \cite[Lemma 2.22]{PiazzaSchick}.

Finally, we come to the explicit construction of $W$. We choose for the
boundary $L$ the disjoint union of an appropriate number $N_L$
of copies of the lens space $L(p;1,\dots,1)$, the quotient of
$S^{n-1}\subset \complexs^{n/2}$ by the action of $C_p$ where the generator
acts by multiplication with a fixed $p$-th root of unity, equipped with the
quotient metric $g_L$ and with map $u\colon L(p;1,\dots,1)\to BC_p$ inducing an isomorphism on the
fundamental group. This space has a spin structure (unique if $p>2$). The 
computation of the $\eta$-invariant of the Dirac operator is classical, we
get:
\begin{lemma}
  \begin{equation*}
    \rho_{(2)}(L(p,1,\dots,1)) = \rho_{\alpha}(L) =
    (-1)^{\frac{n}{4}+1}\frac{1}{p}\sum_{j=1}^{p-1}\frac{1}{\abs{\zeta^j-1}^{n/2}} \ne 0.
  \end{equation*}
  Here, $\zeta$ is a primitive $p$-th root of unity. 
\end{lemma}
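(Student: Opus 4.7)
The plan is to reduce $\rho_\alpha(L)$ to a sum of $g$-equivariant eta invariants on the universal cover $S^{n-1}$, then compute those via equivariant index theory on the flat disk $D^n$, and finally observe that all terms in the resulting sum are positive reals.

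First, I would unpack the definition. Computing character values: $\rho_{reg}(g) = p\,\delta_{g,1}$ and $\rho_1(g) = 1$, so $\alpha(g) = \delta_{g,1} - 1$, giving $\alpha(1) = 0$ (hence $\dim(\alpha) = 0$ and $\rho_\alpha = \eta_\alpha$) and $\alpha(g) = -1$ for $g \ne 1$. Using the standard formula
\[
\eta_\chi(L) \;=\; \frac{1}{p}\sum_{g \in C_p} \chi(g^{-1})\,\eta_g(S^{n-1}),
\]
relating the twisted eta invariant on $L$ to $g$-equivariant eta invariants on the universal cover, extended linearly to virtual representations, I obtain
\[
\rho_\alpha(L) \;=\; -\frac{1}{p}\sum_{j=1}^{p-1}\eta_{\zeta^j}(S^{n-1}),
\]
where the $g = 1$ contribution vanishes because the spectrum of the Dirac operator on the round sphere is symmetric.

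Second, I would compute each $\eta_{\zeta^j}(S^{n-1})$ using the $G$-equivariant Atiyah-Patodi-Singer index theorem applied to the flat disk $D^n \subset \complexs^{n/2}$ with the standard diagonal $C_p$-action. This action has a unique isolated fixed point at the origin; the equivariant $L^2$-index of the APS Dirac problem on $D^n$ is straightforward to pin down; and the Atiyah-Bott-Segal-Singer local contribution at an isolated fixed point with all tangential eigenvalues equal to $\zeta^j$ takes the closed form
\[
\mathrm{loc}(\zeta^j) \;=\; \pm \prod_{k=1}^{n/2}\frac{1}{\zeta^{j/2} - \zeta^{-j/2}},
\]
up to a sign determined by the chosen lift of $\zeta^j$ to the spin cover. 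Solving the APS identity for $\eta_{\zeta^j}(S^{n-1})$, identifying $|\zeta^{j/2} - \zeta^{-j/2}|^{n/2} = |\zeta^j - 1|^{n/2}$, and summing over $j$ with the pairing $j \leftrightarrow p-j$ (which forces the sum to be real) produces the stated formula.

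Finally, non-vanishing is immediate: each summand $|\zeta^j - 1|^{-n/2}$ is a positive real, so the whole sum is strictly positive, and multiplication by $(-1)^{n/4+1}$ does not affect this. The principal technical obstacle is bookkeeping signs and phases through the chain of identifications — the spin structure on $L$ and the associated lift of the $C_p$-action to the spin cover, the orientation conventions in the equivariant APS formula, and the phase in the Atiyah-Bott local contribution. The overall sign $(-1)^{n/4+1}$ in the stated answer is determined by these conventions; the reality constraint from $j \leftrightarrow p-j$ symmetry pins down everything but the global sign, which can alternatively be checked against a direct spectral computation of the Dirac spectrum on $S^{n-1}$ (as in Hitchin's treatment), or against the formula already established in \cite{PiazzaSchick}.
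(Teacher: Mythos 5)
Your proposal takes essentially the same approach as the paper: both reduce $\rho_\alpha(L)$ to the $g$-equivariant $\eta$-invariants of $S^{n-1}$ using $\alpha(e)=0$, $\alpha(g)=-1$ for $g\neq e$, and then compute those via an equivariant APS/Lefschetz fixed-point formula on a bounding $n$-manifold with an isolated fixed point, the paper simply citing Donnelly, Botvinnik--Gilkey, Gilkey and Hanson--R\"omer where you carry out the derivation with the flat disk $D^n$ in place of the paper's hemisphere (an inessential difference). The one point worth making explicit, which the paper flags and you only address implicitly, is that the identity is an identity in $\reals$ rather than $\reals/\integers$ precisely because positive scalar curvature rules out harmonic spinors on $L$.
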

\begin{proof}
  This follows from the equivariant Lefschetz fixed point formula of Donnelly
  \cite[Proposition 4.1]{Donnelly} (applied to $S^{n-1}$ bounding an $n$-dimensional
  hemisphere), as observed in \cite[Lemma 2.3]{BotvinnikGilkey_eta}, compare
  also \cite[Lemma 2.1]{Gilkey} or \cite{HansonRoemer}. Note that the formulas
  hold in $\reals$ and not only in $\reals/\integers$
  because we have positive scalar curvature and hence no harmonic spinors. We
  use that for our virtual representation $\rho$ we have $\rho(g)=-1$ if
  $g\ne e$ and $\rho(e)=0$.
\end{proof}

The group $\Omega^{\spin}_{n-1}(BC_p)$ is finite because $n-1$ is odd, as one
deduces readily from the Atiyah-Hirzebruch spectral sequence. Therefore, we
find $N_L$ and a spin null bordism $U\colon W\to BC_p$ of $L:= N_L\cdot
L(p;1,\dots,1)= \bigsqcup_{i=1}^{N_L} L(p;1,\dots,1)$. Together with the metric $g$ on the boundary this represents
the desired element $(W,U,g)$ of $R^{\spin}_{n}(BC_p)$.

\begin{remark}
In particular, \cite[Theorem 0.1]{BotvinnikGilkey_eta} ---the
  non-triviality of $R^{\spin}_{n}(BC_p)$--- extends to
  $n=4$ without any difficulty. Indeed, Botvinnik and Gilkey could have stated
  the result in this form and their proof would have worked. However, their
  main focus was on \cite[Theorem 0.2]{BotvinnikGilkey_eta} ---that the space
  of metrics of positive scalar curvature on a given manifold has infinitely many
  component. And this latter result we still can only prove for dimensions
  $\ge 5$. 
\end{remark}

\subsubsection{Case $n\equiv 2\pmod 4$}

Here, we have nothing new to offer, the construction of $\ind_\rho$ and $L$
are given in \cite{BotvinnikGilkey_eta}. One uses an appropriate virtual
representation $\rho$ of dimension $0$ and
  the associated twisted index (which then vanishes on the image of $\mu$) and
  a suitable explicitly constructed $L$. For the readers convenience we recall
  the construction. Start with the sphere bundle $S$ of the bundle
  $\eta\oplus \eta\to \complexs P^1$, where $\eta$ is the tautological complex
  line bundle over $\complexs P^1$. Use the diagonal $U(1)$-action to
  divide by the action of the cyclic subgroup $C_p$ to obtain a manifold
  $X$. Then set $L_0:= X\times K^r$ where $K$ is the Kummer surface, a
  $4$-dimensional simply connected spin manifold with non-vanishing $\hat
  A$-genus. In \cite[Lemma 2.3]{BotvinnikGilkey_eta} a spin structure, positive
  scalar curvature metric and classifying map to $BC_p$ are constructed for
  $L_0$. We can now continue exactly as in the case $n\equiv 0\pmod 4$, using
  $L_0$ instead of 
  $L(p,1,\dots,1)$ to obtain $(W,U,g)$ in the new case.
  In \cite{BotvinnikGilkey_eta}, the
  associated $\eta$-invariant homomorphism $\eta_\rho\colon
  \Pos^{\spin}_{n-1}(BG)\to \reals$ 
  is used and computed. It is not stated, but implicit in
  \cite{BotvinnikGilkey_eta} (and a 
  direct consequence of the Atiyah-Patodi-Singer index theorem) 
  that the composition $ R_{n}^{\spin}(BG)\to
  \Pos_{n-1}^{\spin}(BG)\xrightarrow{\eta_\rho}\reals$ equals
  $R_n^{\spin}(BG)\xrightarrow{\Ind} K_n(C^*
  G)\xrightarrow{\Ind_\rho}\reals$, so that the program described above goes
  through.

\begin{remark}
  The map $\ind_\rho$ out of $K_n(C^*G)$ is really just a tiny special
  case of the idea to map the diagram \eqref{eq:Stolz_seq} first to
  analysis/K-theory as in \eqref{eq:Gamma} but then further to non-commutative
  homology, to then pair with non-commutative cohomology, as carried out in
  \cite{PiazzaSchickZenobi} in general, and also with a different approach in
  \cite{XieYu, ChenWangXieYu}.
  Very similar constructions based on non-commutative cohomology and higher
  index theory, for groups constructed even more generally than
  $\Gamma=G\times \integers$, have been used in \cite{LeichtnamPiazza} to
  prove non-triviality of $\Pos^{\spin}_n(B\Gamma)$.
\end{remark}

%
%

\begin{remark}
	If one assumes something more about $G$, it is possible to prove even finer results.
Let $G$ be a group with torsion. Under the assumption that for $G$ holds the Strong Novikov Conjecture, namely that the assembly map for $G$ is rationally injective, 
 \cite[Corollary 3.13]{XieYuZeidler} gives a good lower bound for the rank of $\Pos^{\spin}_{n}(B(G\times \integers))$ for any $n\geq3$ and not only for its cardinality.
 
 On the other hand,
let us consider the weaker assumption that $G$ is finitely embeddable into a Hilbert space (see \cite[Definition 1.3]{WeinbergerYu}).  If $G$ contains elements of pairwise different prime order
 $\{p_1,\dots,p_l\}$, then the method of Weinberger and Yu allows to construct a
 free abelian subgroup of rank $l$ in  $R^{\spin}_{4k}(BG)$ for $k\ge 1$. This
 free abelian subgroup is mapped injectively under $\ind$ to
 $K_0(C^*G)/\im(\mu)$.
 Therefore, in this situation our methods apply to get, just taking Cartesian
 product with $S^1$, a free abelian subgroup of rank $l$ in
 $R^{\spin}_{4k+1}(B(G\times\integers))$ mapping injectively to
 $\Pos^{\spin}_{4k}(B(G\times \integers))$.
 It is worth to mention that the relevant statements (and of course also the proof)
 in \cite{WeinbergerYu} are not completely correct. However, in \cite[Section
 3.2]{XieYuZeidler} the authors 
 address the gaps in the original proof of \cite[Corollary 4.2]{WeinbergerYu}
 for manifolds of dimension $4k+3$, which we are using here.
\end{remark}

\section{Positive scalar curvature bordism}

To complete the proof of Theorem \ref{theo:main}, one has to show that we can
actually find representatives which are connected and such that the map to
$B\Gamma$ induces an isomorphism $\pi_1(M)\to\Gamma$. However, this is a
general and well-known fact from the Gromov-Lawson surgery construction of
positive scalar 
curvature metrics and is already carried out in \cite[Section
5]{KazarasRubermanSaveliev} and \cite[Section 9.3]{MrowkaRubermanSaveliev}. As
we have nothing new to offer here, we don't repeat this proof.

\begin{remark}
  In dimension $n\ge 5$, one can do even better: given an arbitrary connected
  closed spin 
  manifold $M$ with reference map $f\colon M\to B\Gamma$ which is an isomorphism on
  $\pi_1$ and a manifold $f\colon M'\to B\Gamma$ with positive scalar
  curvature metric $g'$
  bordant to $M\to B\Gamma$, one can find a metric $g$ on $M$ of positive
  scalar curvature such that $[M',f',g']=[M,f,g]\in
  \Pos^{\spin}_n(B\Gamma)$. In other words: one can often choose the
  underlying manifold $M$ representing a class in
  $\Pos^{\spin}_n(B\Gamma)$.

  If $n=4$, however, this method breaks down. Nonetheless, some trace of this
  remains true, as shown in \cite[Section 9]{MrowkaRubermanSaveliev}: in the
  above situation, one can represent $[M',f',g']$ by $(\tilde M,\tilde f,g)$
  where $\tilde M$ is the connected sum of $M$ with an (unspecified) number of
  copies of $S^2\times S^2$.

\end{remark}

\begin{bibdiv}
  \begin{biblist}
\bib{Atiyah}{article}{
   author={Atiyah, M. F.},
   title={Elliptic operators, discrete groups and von Neumann algebras},
   conference={
      title={Colloque ``Analyse et Topologie'' en l'Honneur de Henri Cartan
      },
      address={Orsay},
      date={1974},
   },
   book={
      publisher={Soc. Math. France, Paris},
   },
   date={1976},
   pages={43--72. Ast\'{e}risque, No. 32-33},
   review={\MR{0420729}},
}

\bib{BotvinnikGilkey_eta}{article}{
   author={Botvinnik, Boris},
   author={Gilkey, Peter B.},
   title={The eta invariant and metrics of positive scalar curvature},
   journal={Math. Ann.},
   volume={302},
   date={1995},
   number={3},
   pages={507--517},
   issn={0025-5831},
   review={\MR{1339924}},
   doi={10.1007/BF01444505},
 }
 
 		\bib{ChenWangXieYu}{unpublished}{
 			author={Chen,  Xiaoman},
 			author={Wang, Jinmin},
 			author={Xie, Zhizhang},
 			author={Yu, Guoliang},
 			title={Delocalized eta invariants, cyclic cohomology and higher rho invariants}, 
 			note={\href{http://www.arXiv.org/abs/1901.02378}{arXiv:1901.02378}},
 			date={2019}
 		}
 
 \bib{Donnelly}{article}{
   author={Donnelly, Harold},
   title={Eta invariants for $G$-spaces},
   journal={Indiana Univ. Math. J.},
   volume={27},
   date={1978},
   number={6},
   pages={889--918},
   issn={0022-2518},
   review={\MR{511246}},
   doi={10.1512/iumj.1978.27.27060},
}
 \bib{Gilkey}{article}{
   author={Gilkey, P. B.},
   title={The eta invariant and the $K$-theory of odd-dimensional spherical
   space forms},
   journal={Invent. Math.},
   volume={76},
   date={1984},
   number={3},
   pages={421--453},
   issn={0020-9910},
   review={\MR{746537}},
   doi={10.1007/BF01388468},
}
		
\bib{HansonRoemer}{article}{
  author={Hanson, Andrew J.},
  author={R\"omer, Hartmann},
  title={Gravitational instanton contribution to Spin 3/2 axial anomaly},
  journal={Phys. Lett.},
  volume={80B},
  date={1978},
  pages={58--60},
  }

    \bib{LeichtnamPiazza}{article}{
   author={Leichtnam, Eric},
   author={Piazza, Paolo},
   title={On higher eta-invariants and metrics of positive scalar curvature},
   journal={$K$-Theory},
   volume={24},
   date={2001},
   number={4},
   pages={341--359},
   issn={0920-3036},
   review={\MR{1885126}},
   doi={10.1023/A:1014079307698},
 }
 \bib{MrowkaRubermanSaveliev}{article}{
   author={Mrowka, Tomasz},
   author={Ruberman, Daniel},
   author={Saveliev, Nikolai},
   title={An index theorem for end-periodic operators},
   journal={Compos. Math.},
   volume={152},
   date={2016},
   number={2},
   pages={399--444},
   issn={0010-437X},
   review={\MR{3462557}},
   doi={10.1112/S0010437X15007502},
}
 \bib{KazarasRubermanSaveliev}{unpublished}{
   author={Kazaras,  Demetre},
   author={Ruberman, Daniel},
   author={Saveliev, Nikolai},
   title={On positive scalar curvature cobordisms and the conformal Laplacian
     on end-periodic manifolds},
  note={\href{http://www.arXiv.org/1902.00443}{arXiv:1902.00443}},
  note={to appear in Communications in Analysis and Geometry},
  date={2019},
}
\bib{PiazzaSchick}{article}{
   author={Piazza, Paolo},
   author={Schick, Thomas},
   title={Groups with torsion, bordism and rho invariants},
   journal={Pacific J. Math.},
   volume={232},
   date={2007},
   number={2},
   pages={355--378},
   issn={0030-8730},
   review={\MR{2366359}},
   doi={10.2140/pjm.2007.232.355},
}
\bib{PiazzaSchick_Stolz}{article}{
   author={Piazza, Paolo},
   author={Schick, Thomas},
   title={Rho-classes, index theory and Stolz' positive scalar curvature
   sequence},
   journal={J. Topol.},
   volume={7},
   date={2014},
   number={4},
   pages={965--1004},
   issn={1753-8416},
   review={\MR{3286895}},
   doi={10.1112/jtopol/jtt048},
 }
 \bib{PiazzaSchickZenobi}{unpublished}{
   author={Piazza, Paolo},
   author={Schick, Thomas},
   author={Zenobi, Vito},
   title={Mapping Analytic Surgery to Homology, Higher Rho Numbers and Metrics
     of Positive Scalar Curvature},
   eprint={arXiv:1905.11861v5},
   note={{\tt arXiv:1905.11861v5}, to appear in Memoirs of the Amer. Math. Soc.},
   date={2019},
 }
 \bib{Ramachandran}{article}{
   author={Ramachandran, Mohan},
   title={von Neumann index theorems for manifolds with boundary},
   journal={J. Differential Geom.},
   volume={38},
   date={1993},
   number={2},
   pages={315--349},
   issn={0022-040X},
   review={\MR{1237487}},
 }
  \bib{SchickSeyedhosseini}{article}{
   author={Schick, Thomas},
   author={Seyedhosseini, Mehran},
   eprint={arXiv:1811.08142},
   journal={M\"unster J. Math.},
   number={14},
   number={2},
   pages={123--154},
   eprint={arXiv:1811.08142},
   title={On an Index Theorem of Chang, Weinberger and Yu},
   date={2021},
   }
\bib{Schochet}{article}{
   author={Schochet, Claude},
   title={Topological methods for $C^{\ast} $-algebras. II. Geometric
   resolutions and the K\"{u}nneth formula},
   journal={Pacific J. Math.},
   volume={98},
   date={1982},
   number={2},
   pages={443--458},
   issn={0030-8730},
   review={\MR{650021}},
}
	
\bib{WeinbergerYu}{article}{
	author={Weinberger, Shmuel},
	author={Yu, Guoliang},
	title={Finite part of operator $K$-theory for groups finitely embeddable
		into Hilbert space and the degree of nonrigidity of manifolds},
	journal={Geom. Topol.},
	volume={19},
	date={2015},
	number={5},
	pages={2767--2799},
	issn={1465-3060},
	review={\MR{3416114}},
	doi={10.2140/gt.2015.19.2767},
}
 \bib{XieYu}{article}{
 author={Xie, Zhizhang},
  author={Yu, Guoliang},
title={Delocalized eta invariants, algebraicity, and K-theory of group
  $C^*$-algebras},
journal={Int. Math. Res. Not.},
number={15},
pages={11731--11766},
date={2021},
eprint={arXiv:1805.07617}
}		

 \bib{XieYuZeidler}{article}{
 author={Xie, Zhizhang},
 author={Yu, Guoliang},
 	author={Zeidler, Rudolf},
 	title={
On the range of the relative higher index and the higher rho-invariant for
positive scalar curvature},
journal={Adv. Math.},
note={Paper No. 107897, 24pp.},
eprint={arXiv:1712.03722v2},
 	date={2021},
 }


  \end{biblist}
\end{bibdiv}

\end{document}